\documentclass[12pt]{amsart}

\frenchspacing

\sloppy





\usepackage{mathtools} 
\usepackage[top=100pt,bottom=60pt,left=96pt,right=92pt]{geometry}
\usepackage[mathscr]{eucal}
\usepackage{hyperref} 
\usepackage{amsmath}
\usepackage{amsthm}
\usepackage{amssymb}
\usepackage{amsfonts} 
\usepackage{mathrsfs}
\usepackage{amscd}
\usepackage{enumitem} 

\usepackage[pdftex]{color}
\usepackage{xcolor}
\usepackage{layout}
\usepackage{stmaryrd} 
\usepackage[pdftex]{graphicx}
\usepackage{txfonts} 
\usepackage{verbatim}
\usepackage{textcomp}


\numberwithin{equation}{section}

\newtheorem{theorem}{Theorem}[section]
\newtheorem{proposition}[theorem]{Proposition}
\newtheorem{corollary}[theorem]{Corollary}

\newtheorem{definition}[theorem]{Definition}

\theoremstyle{plain}



\newcommand{\vungoc}{V\~u Ng\d{o}c }




\newcommand{\R}{\mathbb{R}}
\newcommand{\mbS}{\mathbb{S}} 
\newcommand{\T}{\mathbb{T}} 
\newcommand{\Z}{\mathbb{Z}}



\def\epsilon{\varepsilon}
\def\phi{\varphi}

\newcommand{\al}{\alpha}

\newcommand{\om}{\omega}























\newcommand{\mcF}{\mathcal F}

\newcommand{\mcM}{\mathcal M}

















\DeclareMathOperator{\Id}{Id}

\DeclareMathOperator{\Span}{Span}


\allowdisplaybreaks


\newcommand{\refperiodCost}{Definition \ref{periodCost}}

\newcommand{\refzeroCosts}{Proposition \ref{zeroCosts}}

\newcommand{\refgeneralToric}{Corollary \ref{generalToric}}


\begin{document}

\title[Toric systems via transport costs]{Characterization of toric systems via transport costs}

\author{Sonja Hohloch}

\date{\today}

\begin{abstract}
 We characterize completely integrable Hamiltonian systems inducing an effective Hamiltonian torus action as systems with zero transport costs w.r.t.\ the time-$T$ map where $T\in \R^n$ is the period of the acting $n$-torus.
\end{abstract}

\maketitle


\section{Introduction}

Integrable Hamiltonian systems play an important role in physics and mathematics since they describe systems with symmetries. During the last 3-4 decades, there have been several breakthroughs in terms of achieving local or global, topological or symplectic classifications of certain types of integrable systems. 

Being far from exhaustive, let us just mention the symplectic classifications of interest for this short note: There is Delzant's \cite{delzant} symplectic classification of those integrable systems that induce an effective Hamiltonian torus action --- the classifying invariant hereby is the image of the momentum map. The singular points of toric or toric type systems only admit elliptic and regular components, but no focus-focus or hyperbolic components --- which may occur at nondegenerate singularities of integrable systems (see the local normal form by Eliasson \cite{eliasson}, Miranda $\&$ Zung \cite{miranda-zung}, and others). In addition to the singularities appearing in toric systems, focus-focus singularities may occur in addition in so-called semitoric systems who have been classified by Pelayo $\&$ \vungoc\ \cite{pelayo-vungoc2009, pelayo-vungoc2011} on four dimensional symplectic manifolds.

In the present note, we give a characterization of toric systems by means of `transport costs' which measure `how toric or nontoric' a system is: toric systems on $2n$-dimensional manifolds are precisely those systems having zero transport costs w.r.t.\ the time-$T$ map where $T \in \R^n$ is the period of the acting $n$-torus.
To be more precise, we introduce the notion of {\em periodicity costs} (see \refperiodCost) and characterize toric systems as having zero periodicity costs (see \refzeroCosts\ and \refgeneralToric).

This approach is inspired by the techniques developed around the transport problems of Monge \cite{monge} and Kantorovich \cite{kantorovich42, kantorovich48} where, for given cost functions, optimal transport functions resp.\ transport measures are looked for. Transport problems have been vividly studied during the last 3-4 decades, see e.g.\ the monographs by Rachev $\&$ R\"uschendorf \cite{rachev-rueschendorf} and Villani \cite{villani} for an overview.

Since transport problems `live naturally' within the calculus of variation, but integrable systems, due to their rigidity, usually do not at all `fit well together' with variational methods, it is quite astonishing that `periodicity costs' in the sense of transport costs provide a meaningful notion for integrable systems, even singling out the subclass of toric systems.

In future projects, we hope to find an answer to e.g.\ the following questions:
\begin{enumerate}[label={\arabic*)}]
 \item 
 Can one use the notion of (localized?) periodicity costs to investigate the behaviour around focus-focus points in semitoric systems?
 \item
 Do periodicity costs relate to the symplectic invariants classifying semitoric systems, in particular to the Taylor series invariant?
 \item
 Are there more methods and ideas from transport theory that have an application within integrable systems?
\end{enumerate}


\subsection*{Acknowledgements}
The author wishes to thank Marine Fontaine for helpful remarks. The work on this paper was partially supported by the FWO-EoS project G0H4518N and the UA-BOF project with Antigoon-ID 31722.


\section{Notions and conventions}

\noindent
Within this section, let $(M, \om)$ be a $2n$-dimensional compact symplectic manifold. 


\subsection{Basic definitions and conventions}

The {\em Hamiltonian vector field} $X^f$ of a smooth function $f: M \to \R$ is defined by $\om(X^f, \cdot) = -df$. The flow of the associated {\em Hamiltonian equation} $z' = X^f(z)$ is called {\em Hamiltonian flow of $f$} and denoted by $\phi^f$. It is a smooth map $\phi^f: \R \times M \to M$ where we usually write $\phi^f(t, p)=:\phi^f_t(p)$ with $t \in \R$ and $p \in M$. Hence, for all $t \in \R$, we get a diffeomorphism $\phi^f_t: M \to M$ that is in fact symplectic and often referred to as {\em time-$t$ map}.

Given two smooth functions $f, g: M \to \R$, their {\em Poisson bracket induced by $\om$} is defined as $\{f, g\}:= \om(X^f, X^g) = -df(X^g)=dg(X^f)$. The smooth functions $f, g: M \to \R$ are said to {\em Poisson commute} if $\{f, g\}=0$. 

In our sign convention, the Lie bracket and the Poisson bracket are related via $[X^f, X^g]=X^{-\{f, g\}}$. Poisson commutativity of $f$ and $g$ implies $[X^f, X^g]=0$ and thus commutativity of their Hamiltonian flows, i.e., $\phi^f_s \circ \phi^g_t = \phi^g_t \circ \phi^f_s$ for all $s, t \in \R$.

For more details, proofs, and further reading, we refer the interested reader e.g.\ to McDuff $\&$ Salamon \cite{mcduff-salamon}.


\subsection{Integrable Hamiltonian systems}

Recall that $\dim M = 2n$.
A smooth function $h:=(h_1, \dots, h_n) : M \to \R^n$ is said to be a {\em (momentum map of a) $2n$-dimensional completely integrable Hamiltonian system} if $X^{h_1}$, \dots, $X^{h_n}$ are almost everywhere linearly independent and if $\{h_i, h_j\}=0$ for all $1 \leq i, j \leq n$.

We briefly write $(M, \om, h)$ for a completely integrable system on $(M, \om)$ with momentum map $h$. We speak of a {\em compact} completely integrable system if we want to emphasize that the underlying symplectic manifold $(M, \om)$ is compact.

We call $p \in M$ a {\em regular} point of a completely integrable systems $(M, \om, h)$ if $\dim \bigl(\Span\{ X^{h_1}(p), \dots, X^{h_n}(p)\} \bigr) = n$ and {\em singular} otherwise. We denote the set of regular points by $M^{reg}$ and the set of singular points by $M^{sing}$. 
The property {\em almost everywhere} in the definition of an integrable system is understood w.r.t.\ to the measure $\mu_\om$ induced by the $n$-fold wedge product $\om^n$ of $\om$ (which in turn is continuous w.r.t.\ the Lebesgue measure and vice versa). Thus $\mu_\om(M) = \mu_\om(M^{reg})$ and $\mu_\om(M^{sing})=0$.

Let $t:=(t_1, \dots, t_n) \in \R^n$ and let $\al$ be a permutation of the set $\{1, \dots, n\}$. Commutativity of the flows of the component functions $h_1, \dots, h_n$ implies
$$  \phi_{t_1}^{h_1} \circ \cdots \circ \phi_{t_n}^{h_n} =  \phi_{t_{\al(1)}}^{h_{\al(1)}} \circ \cdots \circ \phi_{t_{\al(n)}}^{h_{\al(n)}}.$$
Thus a completely integrable system induces a welldefined (Hamiltonian) action of the abelian group $(\R^n, +)$ on $M$ via 
$$\R^n \times M \to M, \quad (t.p):= \phi^h_t(p):= \phi_{t_1}^{h_1} \circ \cdots \circ \phi_{t_n}^{h_n} .$$
For details, proofs, and further reading, we refer e.g.\ to Bolsinov $\&$ Fomenko \cite{bolsinov-fomenko}.


\subsection{Toric systems}

Let $\T^1 = \mbS^1 = \R \slash 2\pi \Z$ and $\T^n = (\mbS^1)^n$ and recall that a group action is said to be {\em effective} or {\em faithful} if the identity element is the only one acting trivially. 

A $2n$-dimensional completely integrable system $(M, \om, h=(h_1, \dots, h_n))$ is {\em toric} if the action induced by the flow $\phi^h: \R \times M \to M$ is an effective (Hamiltonian) $\T^n$-action, i.e., $\phi^{h_k}_{2 \pi}= \Id$ for all $k \in \{1, \dots, n\}$ and $2\pi$ is the `minimal common period' of the $n$ Hamiltonian circle actions induced by $h_1, \dots, h_n: M \to \R$.

For more details, proofs, and further reading, we refer e.g.\ to Cannas da Silva \cite{cannas}.


\section{Optimal transport}

Let $M$ be a compact manifold and $\mu_-$ and $\mu_+$ two (positive) measures with same total mass $\mu_-(M) = \mu_+(M)< \infty$. Let $c: M \times M \to \R^{\geq 0}$ a `sufficiently regular' function, usually referred to as {\em cost function}.


\subsection{The Monge transport problem}

Consider the {\em space of transport maps} 
$$\mcF(\mu_-, \mu_+):=\{ f: M \to M \mid f \mbox{ measurable}, f(\mu_-) =\mu_+ \}$$ 
where $f(\mu_-)$ denotes the image or push forward measure of $\mu_-$ under $f$.
Formulated in modern language, the French mathematician Monge \cite{monge} asked in 1781 if there is $f \in \mcF(\mu_-,\mu_+)$ minimizing 
$$
\int_Mc(x, f(x)) \ d\mu_-
$$
over $\mcF(\mu_-, \mu_+)$. This is a nonlinear optimization problem and usually referred to as {\em Monge transport problem}.

Note that, if $\mu_-$ contains point measures, there does not necessarily exist a transport {\em map} since the requirement $f(\mu_-) =\mu_+$ may force the mass of {\em one} point to be distributed over {\em several distinct} points.

In 1979, Sudakov \cite{sudakov} proposed a proof of Monge's problem in $\R^n$ with the Euclidean distance as cost function. Unfortunately the proof turned out to have a gap (cf.\ Ambrosio \cite[p.\ 137]{ambrosio1}, \cite[Chapter 6]{ambrosio2}) that can only be mended under stronger assumptions.

For more details and references, we refer the reader e.g.\ to the monographs by Rachev $\&$ R\"uschendorf \cite{rachev-rueschendorf} and Villani \cite{villani}.


\subsection{The Kantorovich transport problem}

Let $p_-, p_+: M \times M \to M$ be the projections on the first and second factor respectively and consider the {\em space of transport measures} 
$$
\mcM(\mu_-, \mu_+):= \{\mu \mbox{ measure on } M \times M \mid  p_-(\mu) = \mu_-, \ p_+(\mu) = \mu_+\}.
$$ 
Then the search for a measure $\mu \in \mcM(\mu_-, \mu_+)$ minimizing
$$
\int_{M \times M} c(x, y) \ d\mu
$$
over $\mcM(\mu_-, \mu_+)$ is referred to as solving the {\em Kantorovich transport problem}. It is named after the Russian mathematician Kantorovich \cite{kantorovich42, kantorovich48} and it enjoys much more `analysis friendly' properties than Monge's problem, for details see e.g.\ Rachev $\&$ R\"uschendorf \cite{rachev-rueschendorf} and Villani \cite{villani}. 

Both transport problems are related as follows:
If $f\in \mcF(\mu_-, \mu_+)$ is a solution of Monge's problem, then $(\Id \times f)(\mu_-) \in \mcM(\mu_-, \mu_+) $ such that
\begin{align*}
\inf_{ f \in \mcF(\mu_-, \mu_+)} \int_Mc(x, f(x)) \ d\mu_- & = \inf_{ f \in \mcF(\mu_-, \mu_+)} \int_{M \times M} c(x, y) \ d \bigl( (\Id \times f)(\mu_-) \bigr)  \\
& \geq \inf_{\mu \in \mcM(\mu_-, \mu_+)} \int_{M \times M} c(x, y) \ d\mu.
\end{align*}
i.e., a solution of Kantorovich's problem gives a lower bound for Monge's problem. Under certain convexity and growth assumptions on the cost function, Gangbo $\&$ McCann \cite{gangbo-mccann} stated an explicit formula for the optimal transport map and showed that optimal transport measures `lie in the graph' of the optimal transport map.


\section{Characterization of toric systems by transport costs}

\noindent
Throughout this section, let $(M, \om)$ be a compact $2n$-dimensional symplectic manifold. 

In what follows, we consider the following modified transport problem: Given a cost function and a certain type of transport maps, how does the `minimal' transport map {\em within this type of transport maps} look like?


\subsection{The cost functional}

Let $(M, \om, h)$ be a completely integrable system. Let $t=(t_1, \dots, t_n) \in \R^n$ and recall that the time-$t$ map $\phi_t^h: M \to M$ is symplectic. The $n$-fold wedge product $\om^n$ of $\om$ is a volume form on $M$ that is invariant under the time-$t$ map, i.e., $\left(\phi^h_t \right)^* \om^n = \om^n$. When we consider $\om^n$ as measure on $M$ we write $\mu_\om$. The image measure under the time-$t$ map satisfies $\phi^h_t(\mu_\om) = \mu_\om$.

Let $c: M \times M \to \R^{\geq 0} $ be a continuous cost function and let $U \subseteq M$ be open. Define the parameter depending integral
\begin{equation*}
C_t^h(U, c):=\int_U c(x, \phi_t^h(x)) \ d\mu_\om.
\end{equation*}
The map 
$$c \circ (\Id \times\ \phi_t^h ) \ : \ \R \to \R^{\geq 0}, \qquad t \mapsto c(x, \phi_t^h(x))$$ 
is continuous for all $x \in M$ and so is $\R \to \R^{\geq 0}$, $t \mapsto C_t^h(U, c)$ for all open $U \subseteq M$.


\subsection{Characterization of toric systems}

We begin with

\begin{definition}
 Let $M$ be a manifold. A function $c: M \times M \to \R^{\geq 0}$ is {\em metric-like} if 
\begin{enumerate}[label={\arabic*)}]
 \item 
 $c(x, y) = 0$ for $x, y \in M$ if and only if $x=y$.
 \item
 $c(x, y) = c(y, x)$ \ for all $x, y \in M$.
\end{enumerate}
\end{definition}

For instance, the Euclidean distance is a continuous metric-like cost function. Its square is a smooth metric-like cost function.

The functional $ C_t^h(U, c)$ can be used to measure `how periodic' a given completely integrable system is: 

\begin{definition}
\label{periodCost}
 Let $(M, \om, h)$ be a $2n$-dimensional compact completely integrable system and $c: M \times M \to \R^{\geq 0}$ a continuous metric-like cost function and $T \in \R^n$. We call $C_T^h(M, c) \in [0, \infty[$ the {\em $T$-periodicity costs} of $(M, \om, h)$ w.r.t.\ the cost function $c$.
\end{definition}

Now we will see what these notions mean for toric systems.

\begin{proposition}
\label{zeroCosts}
 Let $(M, \om, h)$ be a $2n$-dimensional compact completely integrable system and $c: M \times M \to \R^{\geq 0}$ a continuous metric-like cost function.
 Then $(M, \om, h)$ is toric if and only if $C_{(2\pi, \dots, 2 \pi)}^h(M, c)=0$ and $C_{s}^h(M, c) > 0$ for all $s \in \R^n \setminus\{(2\pi k, \dots, 2 \pi k) \mid k \in \Z \} $.
\end{proposition}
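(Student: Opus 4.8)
The heart of the matter is a reduction showing that the choice of cost function is immaterial. I would first establish, for an arbitrary $s \in \R^n$, the equivalence $C_s^h(M,c) = 0 \iff \phi^h_s = \Id$. The integrand $x \mapsto c(x,\phi^h_s(x))$ is nonnegative and continuous, being the composition of the continuous $c$ with the (smooth, hence continuous) map $x \mapsto (x,\phi^h_s(x))$; and $\mu_\om$, as the measure of the volume form $\om^n$, assigns positive mass to every nonempty open set. A nonnegative continuous function whose integral against such a measure vanishes is identically zero, so $C_s^h(M,c)=0$ forces $c(x,\phi^h_s(x))=0$ for every $x$, whence the first metric-like axiom gives $\phi^h_s(x)=x$ for all $x$, i.e.\ $\phi^h_s = \Id$. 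The reverse implication is immediate from $c(x,x)=0$. In particular the characterization is independent of the admissible cost $c$, and the whole proposition becomes a statement about the kernel $K := \{\,s \in \R^n \mid \phi^h_s = \Id\,\}$ of the induced $\R^n$-action.

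For the forward implication I would argue directly from the definition of toric. Since each circle action satisfies $\phi^{h_k}_{2\pi} = \Id$, the commuting flows compose to $\phi^h_{(2\pi,\dots,2\pi)} = \phi^{h_1}_{2\pi}\circ\cdots\circ\phi^{h_n}_{2\pi} = \Id$, so by the reduction $C^h_{(2\pi,\dots,2\pi)}(M,c)=0$. Effectiveness of the induced $\T^n$-action means the factorization $\R^n \to \T^n \to \Diff(M)$ has injective second arrow, so the kernel of $\R^n \to \Diff(M)$ is exactly the period lattice $(2\pi\Z)^n$; hence $\phi^h_s \neq \Id$ for every $s$ outside $(2\pi\Z)^n$, and the reduction turns this into strict positivity of the costs there.

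The backward implication carries the real content. Assuming the cost conditions, the reduction tells me that $K$ is a closed subgroup of $\R^n$ containing $(2\pi,\dots,2\pi)$, while the positivity condition forces $K \subseteq (2\pi\Z)^n$. The goal is to upgrade this to the exact equality $K = (2\pi\Z)^n$, for this is equivalent to $(M,\om,h)$ being toric: it supplies $\phi^{h_k}_{2\pi}=\Id$ for each $k$, it lets the action descend to a faithful $\T^n = \R^n/(2\pi\Z)^n$-action, and the absence of off-lattice elements of $K$ certifies that $2\pi$ is the minimal common period. My plan is to extract the missing lower bound $K \supseteq (2\pi\Z)^n$ from the commutativity of the component flows together with the compactness of $M$.

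This last step is where I expect the genuine difficulty to lie. Passing from the single diagonal relation $\phi^h_{(2\pi,\dots,2\pi)}=\Id$, plus positivity of the off-lattice costs, to the individual periodicities $\phi^{h_k}_{2\pi}=\Id$ --- equivalently, excluding a proper (for instance rank-deficient) kernel $K$ that still contains the diagonal element --- cannot be done group-theoretically alone. Here I would invoke the structure theory of compact completely integrable systems: by Arnold--Liouville the regular fibers of $h$ are tori carrying full-rank period lattices, and controlling how these lattices fit together globally is what should force $K$ to be the full standard lattice and the descended torus action to be effective. By contrast, the reduction and the forward direction are essentially formal.
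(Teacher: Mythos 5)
Your reduction lemma --- $C_s^h(M,c)=0$ if and only if $\phi^h_s=\Id$, for every $s\in\R^n$ --- is correct, and it is cleaner than the paper's own handling of the same point (the paper argues by contradiction with open neighbourhoods and makes an unnecessary detour through $M^{reg}$). The forward implication built on it is also sound \emph{for the statement you chose to prove}; but note that you silently replaced the proposition's exceptional set $\{(2\pi k,\dots,2\pi k)\mid k\in\Z\}$, which is the rank-one diagonal lattice, by the full lattice $(2\pi\Z)^n$, and for $n\geq 2$ these are different sets. The genuine gap is the one you flagged yourself: in the backward direction you reduce everything to the claim that the kernel $K:=\{s\in\R^n\mid\phi^h_s=\Id\}$ must equal $(2\pi\Z)^n$ as soon as $(2\pi,\dots,2\pi)\in K\subseteq(2\pi\Z)^n$, and you offer no proof, only the hope that Arnold--Liouville theory will force it. It cannot, because the claim is false. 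Take $M=S^2\times S^2$ with the standard toric system $(z_1,z_2)$ (height functions, $2\pi$-periodic rotations) and set $h=\bigl(\tfrac12(z_1+z_2),\,\tfrac12(z_1-z_2)\bigr)$. This is a compact completely integrable system with $\phi^h_{(s_1,s_2)}=\phi^{z_1}_{(s_1+s_2)/2}\circ\phi^{z_2}_{(s_1-s_2)/2}$, so its kernel is $K=\Z(2\pi,2\pi)+\Z(2\pi,-2\pi)$: an index-two sublattice of $(2\pi\Z)^2$ containing $(2\pi,2\pi)$. All of your hypotheses hold, yet $\phi^{h_1}_{2\pi}=\phi^{z_1}_{\pi}\circ\phi^{z_2}_{\pi}\neq\Id$, so the system is not toric in the paper's normalized sense. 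The step you deferred is therefore not merely hard; no argument can supply it.

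You should also know that the obstruction you ran into sits in the statement, not in your execution. The paper's proof proceeds quite differently from yours: it keeps the diagonal exceptional set and works throughout with the reformulation ``toric $\Leftrightarrow$ $\phi^h_{(2\pi,\dots,2\pi)}=\Id$ and $\phi^h_s\neq\Id$ for all $s\in[0,2\pi]^n\setminus\{(0,\dots,0),(2\pi,\dots,2\pi)\}$,'' which makes both directions nearly formal. But for $n\geq 2$ that reformulation contradicts the paper's own definition of toric: a toric system has $\phi^{h_k}_{2\pi}=\Id$ for each $k$, hence $C^h_{(2\pi,0,\dots,0)}(M,c)=0$ although $(2\pi,0,\dots,0)$ lies outside the diagonal lattice, so the literal ``only if'' direction already fails for the standard toric structure on $S^2\times S^2$. (The paper's extension step ``$(2\pi,\dots,2\pi)$-periodicity implies the claim for all $s$'' has a further gap: the diagonal translates of the cube $[0,2\pi]^n$ do not cover $\R^n$, and the example above, with $s=(2\pi,-2\pi)\in K$, exploits exactly the uncovered region.) The proposition as printed is correct only for $n=1$, where the two lattices coincide. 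What your reduction lemma proves immediately, in both directions and with no Arnold--Liouville input, is the repaired statement: $(M,\om,h)$ is toric if and only if $C^h_s(M,c)=0$ for all $s\in(2\pi\Z)^n$ (equivalently, for the $n$ generators $(0,\dots,0,2\pi,0,\dots,0)$) and $C^h_s(M,c)>0$ for all $s\in\R^n\setminus(2\pi\Z)^n$. I recommend you write up that version; it needs nothing beyond your first paragraph.
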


\begin{proof}
 $(M, \om, H)$ being a toric system means that the flow $\phi^h$ is $(2\pi, \dots, 2\pi)$-periodic and the action is effective, i.e., $(2\pi, \dots, 2\pi)$ is minimal in the sense that $\phi^h_s \neq \Id$ for all $s \in [0, 2\pi]^n \setminus\{(0, \dots, 0), (2\pi, \dots, 2 \pi )\} $.

 {`$\Rightarrow$':} 
 Since $\phi^h_{(2 \pi, \dots, 2 \pi)}(x) = x$ for all $x \in M$ we obtain $c(x, \phi^h_{(2 \pi, \dots, 2 \pi)}(x))=0$ for all $x \in M$ and thus $C_{(2 \pi, \dots, 2 \pi)}^h(M, c) =0$. Now let $s \in [0, 2\pi]^n \setminus\{(0, \dots, 0), (2\pi, \dots, 2 \pi )\} $. Since $\phi^h_s \neq \Id$ there exists $y \in M$ with $\phi_s^h(y) \neq y$. Continuity of $\phi^h_s$ implies the existence of an open neighbourhood $U \subseteq M$ of $y$ with $\phi^h_s(z) \neq z$ for all $z \in U$. Since $c$ is metric-like, $c(z, \phi^h_s(z)) >0$ for all $z \in U$ and thus $0 < C_s^h(U, c) \leq C_s^h(M, c)$. The $(2 \pi, \dots, 2 \pi)$-periodicity of $\phi^h$ implies that this is true for all $s \in \R^n \setminus\{(2\pi k, \dots, 2 \pi k) \mid k \in \Z \} $.

 {`$\Leftarrow$':} Assume that $\phi^h_{(2 \pi, \dots, 2 \pi)} \neq \Id$. Then there exists $x \in M$ with $\phi_{(2 \pi, \dots, 2 \pi)} ^h(x) \neq x$. Metric-likeness implies $c(x, \phi_{(2 \pi, \dots, 2 \pi)} ^h(x))>0$. Hence, since $c$ and $\phi^h$ are continuous, there exists an open neighbourhood $U$ of $x$ with $\phi_{(2 \pi, \dots, 2 \pi)}^h(y) \neq y$ for all $y \in U$ and thus $c(y, \phi_{(2 \pi, \dots, 2 \pi)}^h(y))>0$ for all $y \in U$. Therefore $0< C_{(2 \pi, \dots, 2 \pi)}^h(U, c) \leq C_{(2 \pi, \dots, 2 \pi)}^h(M, c) $ \ $\lightning$.
 
 Since $\phi^h_{(2 \pi, \dots, 2 \pi)} = \Id$, it suffices to show $\phi^h_s \neq \Id$ for all $s \in [0, 2\pi]^n \setminus\{(0, \dots, 0), (2\pi, \dots, 2 \pi )\} $ to prove the claim for all $s \in \R^n \setminus\{(2\pi k, \dots, 2 \pi k) \mid k \in \Z \} $. Since $C_{s}^h(M, c) > 0$ there exists $V \subseteq M$ with $\mu_\om(V)>0$ and $c(z, \phi^h_s(z))> 0$ for all $z \in V$. Because of $\mu_\om(M^{reg})= \mu_\om(M)$ we have $V \cap M^{reg} \neq \emptyset$ and $\mu_\om(V \cap M^{reg})=\mu_\om(V)>0$ and in particular $c(z, \phi^h_s(z))> 0$ for all $z \in V \cap M^{reg}$, i.e., $\phi^h_s(z) \neq z$ for all $z \in V \cap M^{reg}$, i.e., $\phi^h_s \neq \Id$. 
\end{proof}

If we do not work with the normalization $\T^n = (\R \slash 2 \pi \Z)^n$, we obtain more generally:

\begin{corollary}
\label{generalToric}
Completely integrable Hamiltonian systems with periodic flow are characterized by vanishing transport costs w.r.t.\ the time-$T$ map where $T\in \R^n$ is the period of the (effectively) acting $n$-torus.
\end{corollary}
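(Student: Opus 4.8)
The plan is to deduce the general, un-normalized statement from \refzeroCosts by a coordinatewise rescaling of the momentum map that converts an arbitrary period vector $T = (T_1, \dots, T_n) \in \R^n$ into the normalized one $(2\pi, \dots, 2\pi)$. Assume $(M, \om, h)$ has periodic flow, so that each $h_k$ generates a circle action of minimal period $T_k > 0$ and the induced effective $\T^n$-action has period $T = (T_1, \dots, T_n)$. Since every $T_k > 0$, I would put $\lambda_k := T_k/(2\pi)$ and define the rescaled momentum map $\tilde h := (\lambda_1 h_1, \dots, \lambda_n h_n)$.

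First I would verify that $(M, \om, \tilde h)$ is again a completely integrable system that is toric in the normalized sense used in \refzeroCosts. Because the Hamiltonian vector field depends linearly on its generating function, $X^{\tilde h_k} = \lambda_k X^{h_k}$ with $\lambda_k > 0$; hence the $X^{\tilde h_k}$ are linearly independent exactly where the $X^{h_k}$ are, and $\{\tilde h_i, \tilde h_j\} = \lambda_i \lambda_j \{h_i, h_j\} = 0$, so complete integrability is preserved with unchanged regular and singular sets. Rescaling a generator by $\lambda_k$ reparametrizes its flow by $\phi^{\tilde h_k}_t = \phi^{h_k}_{\lambda_k t}$, so $\phi^{\tilde h_k}_{2\pi} = \phi^{h_k}_{T_k} = \Id$ with $2\pi$ again minimal; as $t \mapsto \lambda_k t$ is a bijection, effectiveness and minimality of the common period transfer from $h$ to $\tilde h$. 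Thus $(M, \om, \tilde h)$ is toric with period $(2\pi, \dots, 2\pi)$.

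Next I would track the transport costs through the rescaling. With $D := \mathrm{diag}(\lambda_1, \dots, \lambda_n)$, commutativity of the flows yields $\phi^{\tilde h}_s = \phi^{\tilde h_1}_{s_1} \circ \cdots \circ \phi^{\tilde h_n}_{s_n} = \phi^h_{Ds}$ for all $s \in \R^n$, and in particular $\phi^{\tilde h}_{(2\pi, \dots, 2\pi)} = \phi^h_T$. Since neither the cost function $c$ nor the measure $\mu_\om$ is affected by the rescaling, the defining integrals obey $C_s^{\tilde h}(M, c) = C_{Ds}^h(M, c)$, whence $C_{(2\pi, \dots, 2\pi)}^{\tilde h}(M, c) = C_T^h(M, c)$.

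Finally I would apply \refzeroCosts to $(M, \om, \tilde h)$ and push the conclusion back through the invertible linear substitution $u = Ds$. The proposition gives that $(M, \om, \tilde h)$ is toric if and only if $C_{(2\pi, \dots, 2\pi)}^{\tilde h}(M, c) = 0$ and $C_s^{\tilde h}(M, c) > 0$ for all $s \notin \{(2\pi k, \dots, 2\pi k) \mid k \in \Z\}$. Under $u = Ds$ the normalized exceptional locus is carried to $\{D(2\pi k, \dots, 2\pi k) \mid k \in \Z\} = \{kT \mid k \in \Z\}$, so these two conditions become $C_T^h(M, c) = 0$ and $C_u^h(M, c) > 0$ for all $u \in \R^n \setminus \{kT \mid k \in \Z\}$, which is precisely the asserted characterization at the un-normalized period $T$. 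I expect no real obstacle beyond bookkeeping, since all the analytic content already sits in \refzeroCosts. The one point that genuinely requires care is that each minimal period $T_k$ be strictly positive, so that $D$ is invertible and $u = Ds$ is the bijection carrying the diagonal period lattice onto $\{kT \mid k \in \Z\}$; a vanishing or degenerate period would break the reduction and is excluded by the hypothesis of an effectively acting $n$-torus.
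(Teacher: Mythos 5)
Your proposal is correct and matches the paper's intent: the paper offers no separate proof for this corollary, presenting it as an immediate consequence of Proposition \ref{zeroCosts} once the normalization $\T^n = (\R/2\pi\Z)^n$ is dropped, and your coordinatewise rescaling $\tilde h_k = (T_k/2\pi)\,h_k$ (with the induced reparametrization $\phi^{\tilde h}_s = \phi^h_{Ds}$ and the identity $C_s^{\tilde h}(M,c) = C_{Ds}^h(M,c)$) is precisely the formalization of that renormalization. The care you take with strict positivity of each $T_k$, guaranteed by effectiveness, is the only point where the bookkeeping could fail, and you handle it correctly.
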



\vspace{10mm}

\begin{tabular}{ll}
 \textit{Postal address:} & Sonja Hohloch\\
 & Department of Mathematics\\
 & University of Antwerp\\
& Middelheimlaan 1\\
& B-2020 Antwerp, Belgium\\
& \\
\textit{E-mail:} & \texttt{sonja.hohloch AT uantwerpen.be}
\end{tabular}

\end{document}